\newcommand{\R}{\mathbb{R}}
\newcommand{\N}{\mathbb{N}}
\theoremstyle{definition} \newtheorem{df}{Definition}
\theoremstyle{plain}      \newtheorem{thm}[df]{Theorem}
\theoremstyle{plain}      \newtheorem{prop}[df]{Proposition}
\theoremstyle{plain}      
\theoremstyle{plain}      \newtheorem{lem}[df]{Lemma}
\DeclareMathOperator{\tr}{tr}
\title{Some properties of the nematic radial hedgehog in Landau-de Gennes' theory}
\author{Xavier Lamy \thanks{Universit\'e de Lyon,  CNRS UMR 5208, Universit\'e Lyon 1, Institut Camille Jordan, 43 blvd. du 11 novembre 1918, F-69622 Villeurbanne cedex, France. Email address: xlamy$@$math.univ-lyon1.fr}}
\date{May 30, 2012}
\begin{document}

\maketitle

\begin{abstract} In the Landau-de Gennes theoretical framework of a $Q$-tensor description of nematic liquid crystals, we consider a radial hedgehog defect with strong anchoring conditions in a ball $B\subset \R^ 3$. We show that the scalar order parameter is monotonic, and we prove uniqueness of the minimizing hedgehog below the spinodal temperature $T^*$.
\end{abstract}

\section{Introduction and notations}

In the theory of Landau and de Gennes, a nematic liquid crystal is described by its tensor order parameter, the so-called $Q$-tensor \cite{degennes}, which is a traceless symmetric matrix : denoting by $S_3(\R)$ the set of symmetric $3\times 3$ matrices with real entries, it holds
\[Q\in S_3(\R),\quad \tr Q = 0.\]

A null $Q$-tensor corresponds to an isotropic liquid, a $Q$-tensor with two equal (non zero) eigenvalues correspond to a uniaxial nematic, and a $Q$-tensor with three distinct eigenvalues corresponds to a biaxial nematic \cite{degennes}.

In the case of a uniaxial nematic, the $Q$-tensor can be written
\begin{equation}\label{uniaxial}Q=s(n\otimes n -\frac{1}{3}\mathbf{1}),\end{equation}
where $n$ is a unit vector and $s\in\R$. The unit vector $n$ is called the director of the nematic : it corresponds to the preferred direction, along which the molecules tend to align. The scalar $s$ is called the scalar order parameter : it quantifies the degree of alignment of the molecules along this director \cite{degennes}. The value $s=0$ corresponds to the isotropic state.

We are going to consider a nematic crystal liquid filling a cavity $\Omega\subset\R^3$. The system is spatially inhomogeneous : the tensor order parameter $Q(x)$ depends on $x\in\Omega$, and at equilibrium it should minimize the energy
\[ E(Q) = \int_{\Omega} f_b(Q) + f_e(Q) \; dx, \]
where
\begin{equation}\label{fb} f_b(Q)= \frac{1}{2} a \tr(Q^2) - \frac{1}{3} b\tr (Q^3) +\frac{1}{4} c[\tr (Q^2)]^2,\end{equation}
and
\[ f_e(Q) = \frac{L}{2}|\nabla Q|^2.\]
The bulk energy density $f_b$ describes the nematic-isotropic transition in a spatially homogeneous system, while $f_e$ accounts for elastic deformations of the nematic. Here we work with a particularly simple form of the elastic energy density $f_e$, using the so-called one-constant approximation \cite{degennes}. For the bulk energy density $f_b$, terms of higher order could be taken into account, but the expansion up to fourth order is usually considered -- because it is the simplest one that allows to describe the transition \cite{degennes}. The constants $a,b,c$ and $L$ are material- and temperature-dependent, and in practice only $a$ depends significantly on the temperature. This dependence is usually assumed to be affine \cite{degennes}:
\[a=a_0(T-T^*),\]
 where $T^*$ is the so-called spinodal temperature.

It can be shown \cite[Proposition 1]{majumdar2010} that, for spatially homogeneous systems, the $Q$ tensors minimizing the bulk energy density $f_b$ are necessarily uniaxial (or isotropic), i.e. of the form (\ref{uniaxial}).

For uniaxial $Q$-tensors of the form (\ref{uniaxial}), $f_b$ depends only on the scalar order parameter $s$ :
\[f_b(Q)=f_b(s)=\frac{a}{3}s^2 - \frac{2b}{27}s^3 + \frac{c}{9}s^4.\]

One can distinguish between different temperature regimes, with three critical temperatures $T^*<T_c<T^+$. 
\begin{itemize}
\item In the temperature regime where $T<T^*$, i.e. $a<0$, the isotropic state is unstable : $f_b(s)$ has no local minimum at $s=0$, and has a global minimum at some value $s_+>0$. 
\item For $T^*<T<T_c$, $f_b(s)$ still has a global minimum at some $s_+>0$, but also has a local minimum at $s=0$.
\item For $T_c<T<T^+$, $f_b(s)$ still has a local minimum at some $s_+>0$, but the global minimum is at $s=0$.
\item Above $T^+$ the only critical point of $f_b(s)$ is at $s=0$.
\end{itemize}

We always consider the regime where there is a positive global minimizer of $f_b(s)$ (i.e. $T<T_c$) : the nematic state is stable (but the isotropic state also). For the uniqueness result (Theorem~\ref{unique thm}) we will further restrict ourselves to the case where there is no local minimizer at $s=0$ (i.e. $T<T^*$) : the isotropic state is not stable.

Here we study a nematic droplet : the container is
\[\Omega = B_R = \left\lbrace x\in\R^3 : |x|< R\right\rbrace.\]

A proper rescaling \cite{mkaddemgartland99} allows to work with dimensionless variables and to get rid of the material-dependent constants : we can take the energy to be
\begin{equation}\label{energy}E(Q) =\int_{B_R} \Big( \frac{1}{2}|\nabla Q|^2+f_b(Q)\Big)\: dx, \end{equation}
with the rescaled bulk energy density
\[f_b(Q)=\frac{t}{2}\tr(Q^2)-\sqrt{6}\tr(Q^3)+\frac{1}{2} \tr(Q^2)^2 + C(t).\]

Here $t=27 ac/b^2=27a_0(T-T^*)c/b^2$ is a dimensionless temperature and $C(t)$ is a constant chosen so that 
$\min f_b = 0.$ 

This non-dimensionalization of the problem is described in \ref{nondim} below.

In terms of the reduced temperature $t$, the critical temperatures are $0<1<9/8$ \cite{mkaddemgartland99}, and the different temperature regimes mentioned above are:
\begin{itemize}
\item For $t<0$, the isotropic state is not a local minimizer of $f_b$. (This corresponds to $T<T^*$.)
\item For $0<t<1$, the isotropic state is a local minimizer of $f_b$ but the nematic state is the global minimizer. (This corresponds to $T^*<T<T_c$.)
\item For $1<t<9/8$ the global minimizer of $f_b$ is the isotropic state. However, the nematic state is a local minimizer. (This corresponds to $T_c<T<T^+$.)
\item For $t>9/8$ the nematic state is not defined anymore, since the only local mimimizer is isotropic. (This corresponds to $T>T^+$.)
\end{itemize}
Hence in the following we assume $t<1$, and for the uniqueness result (Theorem~\ref{unique thm}) $t<0$.

We restrict ourselves to the case of strong radial anchoring, which means that we have the Dirichlet condition
\begin{equation}\label{Dirichlet} Q(x) = \sqrt{\frac{3}{2}}h_+ \Big(\frac{x}{|x|}\otimes \frac{x}{|x|} - \frac{1}{3}\mathbf{1} \Big)\quad\quad\text{for }|x|=R,\end{equation}
where $h_+$ is the scalar order parameter minimizing $f_b$: $f_b(Q)=0$ if and only if $Q$ is uniaxial with scalar order parameter $\sqrt{3/2}h_+$. Its value is given by \cite{mkaddemgartland99}
\[h_+ = \frac{3+\sqrt{9-8t}}{4}.\]

 The admissible space for $Q$ is then
\[W_Q=\left\lbrace Q\in W^{1,2}(B_R, S_3(\R)), \tr Q=0,\: Q \text{ satisfies }(\ref{Dirichlet})\right\rbrace.\] 

In general the minimizing $Q$-tensor may not exhibit complete spherical symmetry: symmetry breaking configurations have been observed experimentally and theoretically (see \cite{kraljvirga01} and references therein). However, here we restrict ourselves to the study of spherically symmetric critical points of $E$. 

To this end we define an action of the special orthogonal group $SO(3)$ on the space of admissible $Q$-tensor configurations $W_Q$, in the following way: 
\[( U\cdot Q)(x) =  U Q({}^tUx) {}^tU,\quad U\in SO(3).\]

The spherically symmetric configurations, i.e. those $Q\in W_Q$ satisfying
\[U\cdot Q = Q \quad \quad \forall U\in SO(3),\]
are precisely the maps of the form
\begin{equation}Q_h(x) = \sqrt{\frac{3}{2}}h(|x|)\Big(\frac{x}{|x|}\otimes\frac{x}{|x|}-\frac{1}{3}\mathbf{1}\Big),\end{equation}\label{Qh}
where the scalar order parameter $h$ belongs to the admissible space
\[W_h = \left\lbrace h\in L^2(0,R;dr), h'\in L^2(0,R; r^2dr), h(R)=h_+\right\rbrace.\]
An equilibrium configuration described by such $Q$ is  called radial hedgehog.

Hence we consider, for $h\in W_h$, the energy functional
\begin{equation}\label{I}I(h)=\frac{1}{4\pi}E(Q_h)=\int_0^R r^2\Big(\frac{1}{2} |h'|^2+ \frac{3|h|^2}{r^2} + g(h)\Big) dr,\end{equation}
where
\[g(h)=f_b(Q_h)=\frac{t}{2} h^2-h^3+\frac{1}{2}h^4+C(t).\]
The main goal of this article is to study minimizers of this functional, and in particular their monotonicity and uniqueness.

The problem of studying droplets of nematic liquid crystals is of great physical relevance. It plays an important role in some electro-optic applications, like polymer dispersed liquid crystals (PDLC) devices (see the review article \cite{lopezleon11} and references therein). Moreover, defects in general, and the radial hedgehog defect in particular, usually exhibit universal features \cite{kraljvirga01}, which makes it important to understand their structure better. The monotonicity of the scalar order parameter, which is proved in this article, is physically intuitive and had been observed numerically \cite{gartlandmkaddem00}, but no general mathematical proof of it was available. The uniqueness of this minimizing profile constitutes the main result of the present article.

The spherically symmetric critical points of $E$ have already been studied in a recent paper by A.~Majumdar \cite{majumdar2011}. A.~Majumdar also addresses the questions of uniqueness and monotonicity of the scalar order parameter $h$, in the limiting cases of low temperature ($t\to -\infty$) and big radius ($R\to\infty$), using techniques based on analogies with Ginzburg-Landau vortices. In the present paper these questions are studied without any great restriction on temperature and radius,  using elementary properties of the energy functional $I$, and the differential equation its critical points satisfy.

It should be mentioned here that there exist other continuum theories for nematic liquid crystals. The two most important are the Oseen-Frank model and the Ericksen model, which can both be viewed as simplifications of the Landau-de Gennes theory. In particular they only account for uniaxial configurations. For more information on the mathematical theory of liquid crystal, see the review article \cite{linliu01} and the references therein.

The paper is organized as follows. In Section~\ref{firstprop}, we recall a few basic properties of minimizers of $I$. Among them is a singular ordinary differential equation (\ref{ODE}) satisfied by the scalar order parameter. In Section~\ref{cauchy} we will see that, even though this equation (\ref{ODE}) is singular, it leads to a well-defined Cauchy problem. In Section~\ref{g} we show that $h$ is monotonic. And in Section~\ref{unique} we prove our main result, Theorem~\ref{unique thm} asserting that $I$ has a unique minimizer (in the temperature regime $T<T^*$).

\section{First properties}\label{firstprop}

The following proposition summarizes the first basic properties of the radial hedgehog. It is very similar to \cite[Proposition 2.1.]{majumdar2011}, but we give a proof here for completeness. It strongly relies on the symmetries of the problem.

\begin{prop}
\begin{itemize}
\item[(i)]
There exists a minimizer $h\in W_h$ of $I(h)$. It satisfies the singular ordinary differential equation
\begin{equation}\label{ODE}
h''(r)+\frac{2}{r}h'(r)-\frac{6h(r)}{r^2} = g'(h(r)),\quad r\in(0,R).
\end{equation}
\item[(ii)]
The corresponding $Q$-tensor $Q_h\in W_Q$ (as defined by (\ref{Qh}) above) is a critical point of the non-restricted energy functional $E$, $h$ is analytical on $[0,R)$, and it holds
\[h(0)=h'(0)=0.\]
\item[(iii)] For every $r\in [0,R]$,
\[0\leq h(r) \leq h_+.\]
\end{itemize}
\end{prop}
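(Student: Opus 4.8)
The plan is to treat the three parts in turn, leaning throughout on the rotational symmetry of the problem. For (i), existence comes from the direct method: since $C(t)$ is fixed by $\min f_b=0$ we have $g\ge 0$, so the integrand of $I$ is nonnegative, the kinetic term controls $\|h'\|_{L^2(r^2\,dr)}$ and the term $\int_0^R 3|h|^2\,dr$ controls $\|h\|_{L^2(dr)}$; thus $I$ is coercive on $W_h$. Weak lower semicontinuity of the two quadratic terms is standard, and for $\int_0^R r^2 g(h)\,dr$ one extracts an a.e.-convergent subsequence (a minimizing sequence is bounded in $H^1$ on every $(\delta,R)$, hence strongly $L^2_{\mathrm{loc}}$ convergent away from the origin) and applies Fatou. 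As $W_h$ is convex and closed it is weakly closed, so the constraint $h(R)=h_+$ survives the passage to the limit. Equation (\ref{ODE}) is then the Euler--Lagrange equation: setting $\frac{d}{d\varepsilon}I(h+\varepsilon\varphi)|_{\varepsilon=0}=0$ for $\varphi\in C^\infty_c(0,R)$ and integrating by parts gives it distributionally on $(0,R)$, and since away from $r=0$ this is a regular second-order ODE with smooth right-hand side, bootstrapping upgrades $h$ to a classical solution there.

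For (ii) the main tool is symmetric criticality. The action $(U\cdot Q)(x)=UQ({}^tUx){}^tU$ leaves $E$ invariant (the elastic term by orthogonal invariance, the bulk term because $\tr$ is conjugation-invariant), preserves $W_Q$ (the Dirichlet datum is equivariant), and has fixed-point set exactly $\{Q_h:h\in W_h\}$. Given an admissible variation $P\in W^{1,2}_0(B_R,S_3(\R))$ with $\tr P=0$, its Haar average $\bar P(x)=\int_{SO(3)}{}^tU\,P(Ux)\,U\,dU$ is again such a variation, is $SO(3)$-equivariant, hence equals $\sqrt{3/2}\,\varphi(r)(\tfrac{x}{|x|}\otimes\tfrac{x}{|x|}-\tfrac13\mathbf{1})$ for an admissible scalar variation $\varphi$ with $\varphi(R)=0$; invariance of $E$ then gives $DE(Q_h)[P]=DE(Q_h)[\bar P]=4\pi\,DI(h)[\varphi]=0$, so $Q_h$ is a weak critical point of $E$. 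It therefore solves an elliptic system with polynomial (hence analytic) nonlinearity on all of $B_R$, so elliptic regularity followed by the analyticity theorem for elliptic equations with analytic data makes $Q_h$ real-analytic in $B_R$. Restricting $Q_h$ to a line through the origin shows that $r\mapsto h(|r|)$ is analytic and even near $0$; hence $h$ extends analytically across $0$, with $h'(0)=0$ because only even powers occur, and $h(0)=0$ because otherwise $Q_h(x)=\sqrt{3/2}\,h(|x|)(\tfrac{x}{|x|}\otimes\tfrac{x}{|x|}-\tfrac13\mathbf{1})$ would not converge as $x\to 0$, contradicting continuity. Together with analyticity of solutions of (\ref{ODE}) on $(0,R)$, this gives $h$ analytic on $[0,R)$.

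For (iii) I would compare $h$ with the competitor $\tilde h:=\min(|h|,h_+)\in W_h$ and show $I(\tilde h)\le I(h)$, strictly whenever the (open) set $\{r\in(0,R):h(r)\notin[0,h_+]\}$ has positive measure; since $h$ is a minimizer and, by (ii), continuous on $(0,R)$ with $h(0)=0$ and $h(R)=h_+$, this set must then be empty, i.e. $0\le h\le h_+$. The kinetic term does not increase because $|\tilde h'|\le|h'|$ a.e. (chain rule for Lipschitz truncations), and the term $3|h|^2/r^2$ does not increase because $|\tilde h|\le|h|$ pointwise (the truncation target $[0,h_+]$ contains $0$). For the bulk term one checks $g(\tilde h)\le g(h)$ pointwise by cases: where $h>h_+$ one uses that $g$ is strictly increasing on $(h_+,\infty)$ with $g(h_+)=\min g=0$; where $-h_+\le h<0$ one uses the identity $g(s)-g(-s)=-2s^3$, which is positive for $s<0$; and where $h<-h_+$ one again uses $g(h)>0=g(h_+)$. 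In each of these cases the inequality is strict off $[0,h_+]$, which yields the claimed strict decrease.

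The step I expect to be the main obstacle is (ii): making the symmetric-criticality argument fully rigorous in the correct constrained, weighted function spaces, and then passing from analyticity of $Q_h$ at the origin to the pointwise values $h(0)=h'(0)=0$ of the scalar profile. The truncation in (iii) is the subtle-but-elementary point: one must resist truncating at $0$ directly (which can \emph{increase} the bulk energy near the spurious negative critical point of $g$) and instead fold negative values over via $|h|$.
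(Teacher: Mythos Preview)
Your proposal is correct and, for parts (i) and (ii), follows essentially the paper's approach: direct method for existence (the paper lifts to $E$ and uses weak closedness of the radially symmetric subspace, you work with $I$ directly, which is equivalent), and symmetric criticality plus elliptic regularity for (ii) (the paper quotes Palais' principle; your Haar-averaging argument is precisely its proof in this setting). Your derivation of $h(0)=h'(0)=0$ from analyticity of $Q_h$ along a line through the origin matches the paper's use of the formula $h(r)=\sqrt{3/2}\,u_0\cdot Q(ru_0)u_0$.

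Part (iii) is where you genuinely diverge. The paper splits: the upper bound $h\le h_+$ is obtained by citing an external maximum principle for the full $Q$-tensor Euler--Lagrange system, and the lower bound $h\ge 0$ by the comparison $I(|h|)\le I(h)$, which reduces to the same identity $g(s)-g(-s)=-2s^3$ you use. Your single truncation $\tilde h=\min(|h|,h_+)$ handles both bounds at once, entirely at the scalar level, and avoids any appeal to a vector-valued maximum principle. This is a cleaner, more self-contained argument; the price is the short case-check on $g$ for $h>h_+$ and $h<-h_+$, which the paper does not need because the external result already gives $|Q|\le$ const. Your closing remark about not truncating at $0$ is well taken: for $t<0$ one has $g(h)<g(0)$ for small $h<0$, so $\max(h,0)$ need not decrease the bulk term, whereas reflecting via $|h|$ always does.
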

\begin{proof}
Let $(h_n)$ be a minimizing sequence for $I$ :
\[h_n\in W_h,\quad I(h_n)\longrightarrow \min_{W_h} I.\]
Define $Q_n=Q_{h_n}\in W_Q$. Since
\[\int_{B_R} |\nabla Q_n|^2 \: dx\leq 2 E(Q_n)= 8\pi I(h_n),\]
and $Q_n$ has fixed boundary conditions independent of $n$, the sequence $(Q_n)$ is bounded in the Hilbert space $\mathcal{W}:=W^{1,2}(B_R,S_3(\R))$, so, up to taking a subsequence, it
converges weakly to some $Q\in \mathcal{W}$.

Since the integrand defining the energy functional $E$ is convex in $\nabla Q$, $E$ is weakly lower semi-continuous (see e.g. \cite[Theorem 1.3]{dacorogna}), so that
\[ E(Q)\leq \liminf_{n\to\infty} E(Q_n) = 4\pi \liminf_{n\to\infty} I(h_n) = 4\pi\inf_{W_h} I.\]

The set of spherically symmetric $Q$-tensors is weakly closed in $\mathcal{W}$ because it is a closed subspace of $\mathcal{W}$. Therefore the weak limit $Q$ has to be a spherically symmetric $Q$-tensor: there exists $h\in W_h$ such that $Q=Q_h$. It holds 
\[ I(h)=\frac{1}{4\pi} E(Q)\leq \inf_{W_h} I,\]
hence $h$ is a minimizer of $I(h)$.

Classically, the fact that $h$ is a critical point of $I$ provides us with equation (\ref{ODE}): considering, for small $t$ a perturbation $h+t\varphi$, where $\varphi\in C_c^{\infty}(0,R)$, the vanishing of the first variation reads
\[\int_0^R r^2\big(h'\varphi' + \frac{6}{r^2}h\varphi+ g'(h)\varphi\big)\:dr=0,\]
which becomes, after an integration by part,
\[\int_0^R \big(-r^2 h'' -2rh' + 6r^2 h + r^2 g(h) \big)\varphi\: dr =0,\]
therefore (\ref{ODE}) holds in the sense of distributions in $(0,R)$, and hence it holds almost everywhere. This proves $(i)$.

To prove $(ii)$, we use the Principle of Symmetric Criticality \cite{palais}. For clarity, we state this principle in the form we will use here : if a group $G$ acts linearly and isometrically on a Hilbert space $H$, and $\Sigma$ is the set of points fixed by this action, then for any $G$-invariant functional $f\in C^1(H,\R)$, a critical point $x\in\Sigma$ of $f_{|\Sigma}$ is a critical point of the non-restricted functional $f$.

The action of $SO(3)$ defined above is linear and isometric. Our energy functional $E$ is invariant under this action, and it is $C^1$ because $f_b$ is $C^1$, $|f_b(Q)|\leq c|Q|^6$ and $|f_b'(Q)|\leq c|Q|^5$. 

Hence the Principle of Symmetric Criticality applies, and $Q_h$ is a critical point of the energy functional $E$ : it satisfies the Euler-Lagrange equations
\begin{equation}\label{euler}\Delta Q_{ij}= t\:Q_{ij} - 3\sqrt{6}\: \Big(Q_{ik}Q_{kj} - \frac{1}{3}\delta_{ij} \tr(Q^2)\Big) + 2 \: Q_{ij}\tr(Q^2).\end{equation}

Elliptic regularity theory \cite[Theorem 6.7.6]{morrey} tells us  that $Q_h$ is analytical on $B_R$, so that $h$ is analytical on $[0,R)$, since it holds, for any unit vector $u_0\in\R^3$,
\begin{equation}\label{analytic}h(r)=\sqrt{\frac{3}{2}}u_0 \!\cdot\! Q(ru_0) u_0. \end{equation}

As a consequence, it must hold
\[h(0)=0,\]
since 
\[|\nabla Q|^2(x) = h'(|x|)^2+\frac{6h(|x|)^2}{|x|^2}\]
is continuous. And it must further hold 
\[h'(0)=0,\]
since formula (\ref{analytic})
extends $h$ to a smooth and even function. It concludes the proof of $(ii)$. 

The analyticity of $h$ away from the origin was also clear from the ordinary differential equation (\ref{ODE}) it satisfies (and also at $r=R$ with this argument), but regularity at the origin was not obvious, since functions in the admissible space $W_h$ may even not have a finite limit at the origin.

As to $(iii)$, the upper bound follows from a maximum principle \cite[Proposition 3]{majumdarzarnescu10} for equation (\ref{euler}) satisfied by $Q_h$. The lower bound follows from 
\[0\leq I(|h|)-I(h) = \int_0^R r^2(h^3-|h|^3)\: dr = -2 \int_{h<0} r^2|h|^3 \:dr,\]
since $I(h)$ is minimal. Hence, the set $\{h<0\}$ is empty, and $h\geq 0$ as claimed.
\end{proof}

\section{The ODE satisfied by $h$}\label{cauchy}

We have seen that solutions of (\ref{ODE}) which lie in $W_h$ must verify $h(0)=h'(0)=0$. Their first degree of freedom is $h''(0)$. As a matter of fact, one can rewrite equation (\ref{ODE}) as
\[
\frac{d}{dr}\Big[ r^{-4}\frac{d}{dr}\big(r^ 3 h(r)\big)\Big] = r^{-1} g'(h(r)),\quad r\in(0,R),
\]
and use the following lemma to show the existence of a unique solution of ($\ref{ODE}$) with fixed second derivative at the origin.

Even though the ordinary differential equation (\ref{ODE}) is singular at the origin, Lemma~\ref{cauchypb} shows that this singularity is, in some sense, just a pseudo-singularity: there is still a well-defined Cauchy problem at the origin, and the proof is similar to the non singular case (Picard's Existence Theorem).

\begin{lem}\label{cauchypb}
Let $\alpha,\beta\in\R$, $\alpha <1$. Define $\gamma := 1 -\alpha -\beta$ and assume $\gamma\geq 0$. Let $F:\R\to\R$ be locally Lipschitz with $F(0)=0$. Let $a\in\R$. There exists  some $R_a>0$ and a function $h_a:(0,R_a)\to\R$, unique solution of
\begin{eqnarray}
\frac{d}{dr}\Big[ r^{\alpha}\frac{d}{dr}\big[r^{\beta} h(r)\big]\Big] & = & r^{\alpha+\beta} F(h(r)),\quad \quad \forall r\in(0,R_a) \label{hF}\\
\text{and}\quad\frac{h(r)}{r^\gamma} & \rightarrow & a , \quad\quad\quad\quad\quad\quad\quad\text{as}\quad r\to 0.\label{hFa}
\end{eqnarray}

\end{lem}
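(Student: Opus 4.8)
\emph{Strategy.} The plan is to rewrite the pair of conditions (\ref{hF})--(\ref{hFa}) as a single fixed-point equation $h=Th$ for an integral operator $T$, and then to solve it by the Banach contraction principle on a small interval $(0,R_a]$, just as in Picard's proof in the non-singular case --- the weight $r^\gamma$ in (\ref{hFa}) being precisely the one that makes $T$ contractive.

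\emph{Step 1: reduction to an integral equation.} First I would note that a solution $h$, being continuous on $(0,R_a)$ with $h(r)=O(r^\gamma)$ near $0$ by (\ref{hFa}), makes $r^{\alpha+\beta}F(h(r))=O(r^{\alpha+\beta+\gamma})=O(r)$ (using $F(0)=0$ and $F$ locally Lipschitz), which is integrable at the origin; integrating (\ref{hF}) once shows that $r^\alpha\frac{d}{dr}[r^\beta h(r)]$ tends to a finite limit $\ell$ as $r\to0$. Integrating a second time --- here I use $\alpha<1$, so that $r^{-\alpha}$ is integrable at $0$ and $r^{1-\alpha}\to0$, together with $r^\beta h(r)=r^{1-\alpha}\,r^{-\gamma}h(r)\to0$ --- yields
\[ h(r)=\frac{\ell}{1-\alpha}\,r^{\gamma}+r^{-\beta}\int_0^r s^{-\alpha}\int_0^s\tau^{\alpha+\beta}F(h(\tau))\,d\tau\,ds, \]
and since the double integral is $O(r^{\gamma+2})$, condition (\ref{hFa}) forces $\ell=a(1-\alpha)$. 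Conversely, any continuous $h$ with $h(r)=O(r^\gamma)$ solving this integral equation is $C^1$ after multiplication by $r^\beta$, with $r^\alpha\frac{d}{dr}[r^\beta h]$ again $C^1$, and satisfies (\ref{hF})--(\ref{hFa}). Hence it is enough to produce a unique solution of $h=Th$, where
\[ (Th)(r):=a\,r^{\gamma}+r^{-\beta}\int_0^r s^{-\alpha}\int_0^s\tau^{\alpha+\beta}F(h(\tau))\,d\tau\,ds. \]

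\emph{Step 2: the contraction.} I would work in the complete metric space
\[ X=\Big\{h\in C((0,R_a]):\ \|h\|_X:=\sup_{0<r\le R_a}r^{-\gamma}|h(r)|\le|a|+1\ \text{ and }\ r^{-\gamma}h(r)\to a\ \text{as}\ r\to0\Big\}, \]
with $R_a\le1$ to be fixed and $L$ a Lipschitz constant of $F$ on $[-(|a|+1),|a|+1]$. For $h\in X$ one has $|F(h(\tau))|\le L\|h\|_X\tau^\gamma$, so, using $\alpha+\beta+\gamma=1$ and $\alpha<1$ for convergence of all integrals,
\[ \Big|\int_0^s\tau^{\alpha+\beta}F(h(\tau))\,d\tau\Big|\le\frac{L\|h\|_X}{2}\,s^2 \quad\text{and}\quad |(Th)(r)-a\,r^\gamma|\le\frac{L\|h\|_X}{2(3-\alpha)}\,r^{\gamma+2}. \]
Therefore $\|Th-a\,r^\gamma\|_X\le\frac{L(|a|+1)}{2(3-\alpha)}R_a^2$ and $r^{-\gamma}(Th)(r)\to a$, so $T$ maps $X$ into itself once $R_a$ is small; the same estimate with $F(h_1(\tau))-F(h_2(\tau))$ in place of $F(h(\tau))$ gives $\|Th_1-Th_2\|_X\le\frac{LR_a^2}{2(3-\alpha)}\|h_1-h_2\|_X$, a contraction for $R_a$ small enough. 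Banach's theorem then yields the unique $h_a\in X$ with $h_a=Th_a$, which solves (\ref{hF})--(\ref{hFa}) by Step~1. Any other solution is $O(r^\gamma)$, so after shrinking $R_a$ it lies in $X$, and since $T$ acts locally it equals $h_a$ near $0$; classical uniqueness for (\ref{hF}) away from the origin then propagates this to all of $(0,R_a)$.

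\emph{Main obstacle.} The genuinely delicate part is Step~1: identifying the constants of integration that arise when (\ref{hF}) is integrated twice, i.e.\ verifying that (\ref{hFa}) forces $r^\alpha\frac{d}{dr}[r^\beta h(r)]\to a(1-\alpha)$ and that no spurious additive constant survives in $r^\beta h(r)$. This is exactly where the hypotheses are used --- $\alpha<1$ so that $r^{-\alpha}$ is integrable near $0$ with $r^{1-\alpha}\to0$, and $\gamma\ge0$ so that $r^{\alpha+\beta}F(h)=O(r)$ is integrable --- whereas once the equivalence with $h=Th$ is in place the contraction estimates of Step~2 are entirely routine.
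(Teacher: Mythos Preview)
Your proof is correct and follows essentially the same approach as the paper: both reduce (\ref{hF})--(\ref{hFa}) to the identical integral equation $h=Th$ and then solve it by a Picard-type argument in the weighted norm $\sup r^{-\gamma}|h(r)|$. The only cosmetic difference is that the paper writes out the iteration $h_{n+1}=Th_n$ explicitly and obtains the factorial estimate $|h_{n+1}-h_n|\le(1+|a|)C^nr^{2n+\gamma}/n!$, whereas you invoke the Banach contraction principle directly; the underlying estimates and the choice of $R_a$ are the same.
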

\begin{proof} It suffices to show existence and uniqueness in a neighbourhood of the origin.

Let us first remark that the problem (\ref{hF})-(\ref{hFa}) is equivalent, near zero, to the integral equation
\begin{equation}\label{hint}h(r)=ar^{\gamma}+r^{-\beta}\int_0^r \rho^{-\alpha}\int_0^{\rho} t^{\alpha+\beta} F(h(t))\:dtd\rho\quad\quad\forall r\in (0,\epsilon),\end{equation}
for a continuous $h$ verifying 
\begin{equation}\label{h*}|h(r)|\leq C r^{\gamma}\quad\forall r\in (0,\epsilon).
\end{equation}

Indeed, if $h$ satisfies the boundary condition (\ref{hFa}), then there exists $\epsilon$ and $C$ such that $h$ verifies (\ref{h*}). If, furthermore, $h$ satisfies (\ref{hF}), then an integration of (\ref{hF}) yields
\[ \frac{d}{dr}\Big[r^{\beta}h\Big] = A r^{-\alpha} + r^{-\alpha}\int_0^r t^{\alpha+\beta}F(h)\:dt.\]
A second integration (which is possible since $\alpha <1 $) leads to:
\[ h(r)= B r^{-\beta} + \frac{A}{1-\alpha} r^{\gamma} + r^{-\beta}\int_0^r \rho^{-\alpha}\int_0^{\rho} t^{\alpha+\beta} F(h)\: dt d\rho.\]
The boundary condition (\ref{hFa}) imposes $B=0$ (because $-\beta < \gamma$ as $\alpha<1$) and $A=a(1-\alpha)$.
 
 Conversely, if a continuous $h$ verifies (\ref{hint}) and (\ref{h*}), then derivating twice yields (\ref{hF}). Moreover the second term in the right-hand side of (\ref{hint}) is negligible compared to $ar^{\gamma}$ so that the boundary condition (\ref{hFa}) holds.

Let $K$ be the Lipschitz constant of $F$ on $[-|a|-1,|a|+1]$. Let \[\delta=\sqrt{\frac{2(3-\alpha)}{(1+|a|)K}}\] and $\epsilon =\min (\delta,1)$. We define a sequence of functions $h_n : (0,\epsilon]\to\R$ in the following way : 
\begin{equation}\label{hn}
h_{n+1}(r)=ar^{\gamma}+r^{-\beta}\int_0^r \rho^{-\alpha}\int_0^{\rho} t^{\alpha+\beta} F(h_n(t))\:dtd\rho,\end{equation}
and $h_0(r)=ar^{\gamma}$. The sequence $(h_n)$ is well-defined if for all $n$, 
\begin{equation}\label{wd}|h_n(r)|\leq (1+|a|)r^{\gamma}\quad\quad\forall r \in (0,\epsilon ].\end{equation}
Indeed, (\ref{wd}) implies the convergence of the integral in (\ref{hn}). We prove (\ref{wd}) by induction. Suppose (\ref{wd}) is true for some $n$. Then, making use of
\[|F(y)|=|F(y)-F(0)|\leq K|y|\quad\quad\text{for }|y|\leq 1+|a|\]
we obtain, using (\ref{hn}) and the induction assumption, we find that
\[|h_{n+1}(r)-ar^{\gamma}|\leq K(1+|a|) r^{-\beta}\int_0^r \rho^{-\alpha}\int_0^{\rho} t^{\alpha+\beta} t^{\gamma}\:dtd\rho = \frac{r^2}{\delta^2}r^{\gamma}.\]
This completes the induction since $r^2/\delta^2\leq 1$ for $r\in (0,\epsilon]$.
 
We next show that, for every $r\in (0,\epsilon ]$ and every $n\in\N$, we have
\begin{equation}\label{conv}|h_{n+1}(r)-h_n(r)|\leq (1+|a|) \frac{C^{n}r^{2n+\gamma}}{n!},\end{equation}
where $C=K/(6-2\alpha)$.

This follows by induction via the following calculation :
\begin{eqnarray*}
|h_{n+1}(r)-h_n(r)| & \leq & r^{-\beta}\int_0^r \rho^{-\alpha}\int_0^{\rho} t^{\alpha+\beta} \big| F(h_n(t))-F(h_{n-1}(t))\big| \:dtd\rho\\
& \leq & K r^{-\beta}\int_0^r \rho^{-\alpha}\int_0^{\rho} t^{\alpha+\beta} \big|h_n(t)-h_{n-1}(t)\big|\:dtd\rho\\
& \leq & (1+|a|) K \frac{C^{n-1}}{(n-1)!}r^{-\beta}\int_0^r \rho^{-\alpha}\int_0^{\rho} t^{\alpha+\beta} t^{2n - 2 + \gamma}\:dtd\rho\\
& = & (1+|a|) K \frac{C^{n-1}}{(n-1)!} \frac{r^{\gamma + 2n}}{2n(2n-\alpha +1)}\\
& \leq & (1+|a|) \frac{C^n r^{\gamma+2n}}{n!}.
\end{eqnarray*}

Inequality (\ref{conv}) ensures that the sequence $(h_n)$ converges uniformly on $(0,\epsilon ]$, to some continuous function $h$. The domination $|h_n(r)|\leq (1+|a|)r^\gamma $ allows to take the limit for $n\to\infty$ in (\ref{hn}), so that $h$ verifies the integral equation (\ref{hint}): $h$ is a solution to problem (\ref{hF})-(\ref{hFa}).

To show uniqueness, assume $\tilde{h}$ is a solution of (\ref{hF})-(\ref{hFa}). Then for some  $\epsilon$ it holds  \[|\tilde{h}(r)-ar^{\gamma}|\leq r^{\gamma}\quad\quad\forall r\in (0,\epsilon),\] and (\ref{hint}). An induction similar to the one performed above allows to show
\[|\tilde{h}(r)-h_n(r)|\leq (1+|a|) \frac{C^n r^{2n+\gamma} }{n!}\quad\quad\forall r\in (0,\epsilon),\]
so that $h=\tilde{h}$ and the problem has a unique solution.
\end{proof}

Taking $\alpha=-4$, $\beta=3$ and thus $\gamma=2$, Lemma~\ref{cauchypb} directly applies to our equation, yielding the result claimed above.

\section{Monotonicity of h}\label{g}

Here we keep assuming that we are in the temperature regime $T<T_c$ (i.e. $t<1$).

\begin{prop}
If $h\in W_h$ is a local minimizer of $I$ and satisfies $0\leq h \leq h_+$, then $h$ is increasing on $(0,R)$. In addition, we have
\[h'>0\quad in\quad (0,R].\]
\end{prop}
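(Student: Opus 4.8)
The plan is to prove monotonicity by a combination of an energy-comparison argument and an ODE/maximum-principle argument.

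\medskip

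\noindent\textbf{Step 1: a monotone rearrangement does not increase the energy.} The key observation is that in $I(h)=\int_0^R r^2\bigl(\tfrac12|h'|^2+3|h|^2/r^2+g(h)\bigr)\,dr$, the weight $r^2$ multiplying $|h'|^2$ is increasing, while the weight $r^2\cdot 3/r^2=3$ multiplying $|h|^2$ is constant and the weight $r^2$ multiplying $g(h)$ is increasing. Since $h(R)=h_+$ is the global minimizer of $g$ on $[0,h_+]$ (because $g(h)=f_b(Q_h)$ with $C(t)$ chosen so $\min g=0$, attained at $h_+$ in the regime $t<1$), replacing $h$ by its nondecreasing ``rearrangement'' $\tilde h$ that puts the larger values of $h$ (closer to $h_+$) near $r=R$ should decrease each of the three weighted integrals: the term $\int r^2 g(h)$ decreases because large values of $g$ get moved to small $r$ (small weight); the gradient term decreases by a Polya–Szeg\H{o}-type inequality adapted to the increasing weight $r^2$; and the $\int 3 h^2$ term is invariant under equimeasurable rearrangement since its weight is constant. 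Because $h$ is a minimizer, $\tilde h=h$, forcing $h$ to be nondecreasing. I would carry this out carefully using the layer-cake representation $h(r)=\int_0^{h_+}\mathbf 1_{\{h>\lambda\}}(r)\,d\lambda$ and rearranging each superlevel set $\{h>\lambda\}$ to an interval $(\tau(\lambda),R)$; the delicate point is the gradient term, where one uses that for fixed measure, an interval abutting $R$ minimizes $\int_{\{h'\neq 0\}} r^2$-weighted boundary contributions — more precisely one compares $\int r^2 |\chi'|^2$ for indicator-like transitions. An alternative, perhaps cleaner, route avoiding rearrangement: directly show that if $h$ had a strict local max at some interior $r_0$ with $h(r_0)<h_+$, or a strict local min, one could lower the energy by a truncation/reflection, but the rearrangement statement is the robust version.

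\medskip

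\noindent\textbf{Step 2: from nondecreasing to strictly increasing via the ODE.} Once $h$ is known to be nondecreasing (and smooth, by the Proposition in Section~\ref{firstprop}), write the ODE (\ref{ODE}) as $(r^2 h')'=r^2\bigl(g'(h)+6h/r^2\bigr)=r^2 g'(h)+6h$. Suppose $h'(r_1)=0$ for some $r_1\in(0,R)$. Since $h$ is nondecreasing and $h(0)=0$, $h'\ge 0$ everywhere and $h\ge 0$. On the interval where $h$ is constant equal to some value $c\ge 0$, the ODE forces $0=g'(c)+6c/r^2$ for a range of $r$, hence $c=0$ and $g'(0)=0$ — so the only way $h'$ can vanish on an interval is $h\equiv 0$ there, which would force $h\equiv 0$ by analyticity (Proposition (ii)), contradicting $h(R)=h_+>0$. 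If instead $h'$ has an isolated zero at $r_1$ with $h(r_1)=c>0$, then at $r_1$ we get $(r_1^2h')'(r_1)=r_1^2 g'(c)+6c$. Here I would use that $g'(c)=tc-3c^2+2c^3=c(t-3c+2c^2)$ and analyze the sign: for $0<c<h_+$ we need to check $g'(c)+6c/r_1^2\ge 0$, i.e. that $(r_1^2h')'(r_1)>0$, so that $r_1^2h'$ is strictly increasing through $r_1$, whence $h'<0$ just left of $r_1$ — contradicting $h'\ge0$. The sign $g'(c)+6c/r^2>0$ for $0<c\le h_+$ holds because $g'(c)\ge g'(h_+)=0$ is false in general, but $g'(c)+6c/r^2 = c\bigl(t-3c+2c^2+6/r^2\bigr)$ and $t-3c+2c^2+6/r^2$ — the main quantitative check — must be shown positive; since $6/r^2$ is large near $0$ this is the easy region, and I would verify that even the infimum over $c\in(0,h_+]$ of $t-3c+2c^2$ combined with a crude lower bound on $6/r^2$ suffices, or more cleanly use that $h$ is already a solution of the Cauchy problem from Lemma~\ref{cauchypb} with $h''(0)>0$ (else $h\equiv 0$), giving $h'>0$ near $0$, and then propagate.

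\medskip

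\noindent\textbf{Step 3: boundary point $r=R$.} Finally, $h'(R)>0$: the Proposition extends $h$ analytically past $R$ via the ODE, and by Step 2 $h'>0$ on $(0,R)$; if $h'(R)=0$ then by Hopf's lemma / the strong maximum principle applied to the linear equation $(r^2h')'-6h=r^2 g'(h)$ at the endpoint, or simply by evaluating the ODE at $R$ to get $(r^2h')'(R)=R^2 g'(h_+)+6h_+=6h_+>0$ (using $g'(h_+)=0$), we find $r^2h'$ strictly increasing at $R$, so $h'(R)=0$ would force $h'<0$ just left of $R$, a contradiction. Hence $h'>0$ on $(0,R]$.

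\medskip

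\noindent\textbf{Main obstacle.} The hard part will be Step 1 — making the rearrangement argument for the gradient term rigorous with the inhomogeneous weight $r^2$, since the classical Polya–Szeg\H{o} inequality is for the uniform weight; one must verify that moving each superlevel set to an interval ending at $R$ genuinely decreases $\int r^2|h'|^2$, which ultimately rests on a one-dimensional comparison. If that proves awkward, the fallback is a purely ODE-based argument: show directly from (\ref{ODE}) together with $h(0)=h'(0)=0$, $h(R)=h_+$, $0\le h\le h_+$ that $h'$ cannot vanish in $(0,R)$, by analyzing the first interior zero of $h'$ and using the sign of $(r^2h')' = r^2 g'(h)+6h$ — but that sign is not manifestly positive for all $(r,h)$, so the energy comparison is the more reliable backbone.
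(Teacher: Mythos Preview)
Your Step~3 matches the paper's argument for $h'(R)>0$ and is fine. The rest has genuine gaps.

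\textbf{Step 2 fails as written.} At an interior isolated zero $r_1$ of $h'$ with $h'\ge 0$, you necessarily also have $h''(r_1)=0$ (it is a local minimum of $h'$). Plugging $h'(r_1)=h''(r_1)=0$ into (\ref{ODE}) gives $g'(c)=-6c/r_1^2$, whence
\[
(r^2h')'(r_1)=r_1^2 g'(c)+6c=0,
\]
not $>0$; the first-order sign argument yields nothing, and your ``purely ODE-based'' fallback at the end has the same defect. The paper instead differentiates (\ref{ODE}) once to obtain
\[
h^{(3)}+\frac{2}{r}h''-\frac{8}{r^2}h'+\frac{12}{r^3}h=g''(h)h',
\]
which at such a point gives $h^{(3)}(r_1)=-12\,h(r_1)/r_1^3<0$, and \emph{this} does contradict $h'\ge 0$ near $r_1$.

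\textbf{Step 1 is the wrong tool here.} Your potential-term inequality $\int r^2 g(\tilde h)\le\int r^2 g(h)$ needs $g$ to be monotone on $[0,h_+]$, which fails for $0<t<1$ (since $g'(h)=2h(h-h_-)(h-h_+)$ with $h_-=(3-\sqrt{9-8t})/4\in(0,h_+)$). And, as you yourself flag, a P\'olya--Szeg\H{o} inequality with the increasing weight $r^2$ is not standard and you give no proof of it. The paper bypasses all of this via the second variation, which is available for a \emph{local} minimizer: if $h'<0$ on some maximal interval $(R_1,R_2)\subset(0,R)$ with $h'(R_1)=h'(R_2)=0$, take $\psi=h'\mathbf 1_{(R_1,R_2)}\in W_{h,0}$. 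Multiplying the differentiated ODE by $r^2 h'$ and integrating by parts over $(R_1,R_2)$ yields
\[
I''(h)\cdot(\psi,\psi)=\int_{R_1}^{R_2}\frac{12}{r}\,hh'\,dr-2\int_{R_1}^{R_2}(h')^2\,dr<0,
\]
contradicting local minimality. This uses only $h\ge 0$, nothing about the shape of $g$, so it covers the full range $t<1$ and treats local (not just global) minimizers --- something a rearrangement comparison cannot do.
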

\begin{proof} First we will show that
\[ h' \geq 0 \quad\mathrm{in}\quad(0,R).\]

Indeed, let
\[W_{h,0}=\left\{\psi\in L^2(0,R;dr), \psi'\in L^2(0,R;r^2dr), \psi(R)=0 \right\}.\]
For $\psi\in W_{h,0}$, we have, as $s\to 0$,
\begin{equation}\label{var} I(h+s\psi)=I(h)+sI'(h)\cdot \psi + \frac{s^2}{2} I''(h)\cdot (\psi,\psi) + o(s^2),\end{equation}
with
\[I'(h)\cdot\psi = \int_0^R r^2 (h'\psi'+6 \frac{h\psi}{r^2} +g'(h)\psi)\: dr\]
and 
\[I''(h)\cdot (\psi,\psi) = \int_0^R r^2 ( \psi'^2 + 6 \frac{\psi^2}{r^2} + g''(h)\psi^2)\:dr.\]
Validity of (\ref{var}) follows easily from the fact that $g$ is polynomial and $h$ is bounded. Since $h$ is a local minimizer, we have $I'(h)\cdot\psi = 0$ (this is the Euler Lagrange equation) and $I''(h)\cdot (\psi,\psi) \geq 0$ for every $\psi\in W_{h,0}$.

Our analysis of the Cauchy problem in Section~\ref{cauchy} implies that $h'>0$ on $(0,\delta)$ for some positive $\delta$. Let us assume that $h$ is not monotonic in $(0,R)$: there exist $0<R_1<R_2< R$ such that $h'=0$ at $R_1$ and $R_2$ and $h'<0$ on $(R_1,R_2)$.

Let $\psi = h'\mathbf{1}_{(R_1,R_2)}\in W_{h,0}$. We are going to show that $I''(h)\cdot (\psi,\psi) < 0$, which will contradict the fact that $h$ is a local minimizer.

Differentiating once the equation (\ref{ODE}) satisfied by $h$, one gets
\begin{equation}\label{h'}
 h^{(3)} + \frac{2}{r} h'' - \frac{8}{r^2} h' + \frac{12}{r^3} h = g''(h)h'.
\end{equation}
Multiplying (\ref{h'}) by $r^2 h'$ and integrating from $R_1$ to $R_2$, we obtain, after an integration by parts (using $h'(R_1)=h'(R_2)=0$),

\[\int_{R_1}^{R_2}\Big( -r^2 (h'')^2 - 8 (h')^2 + \frac{12}{r} h h'\Big)  \;dr = \int_{R_1}^{R_2} r^ 2 g''(h) (h')^2 dr.\]
This can be rewritten as
\[I''(h)\cdot (\psi,\psi) = \int_{R_1}^{R_2} \frac{12}{r} h h' \: dr -2 \int_{R_1}^{R_2} (h')^2 \:dr.\]
We obtain the desired contradiction since $h\geq 0$ and $h'<0$ in $(R_1,R_2)$.

Eventually we can actually show that
\[h'>0\quad \mathrm{in}\quad (0,R].\]

Assume first that for some $r_0 \in (0,R)$ we have $h'(r_0)=0$ then since $h'\geq 0$ it must hold $h''(r_0)=0$. Therefore, using equation (\ref{h'}) we get
\[h^{(3)}(r_0)=-\frac{12}{r_0^3}h(r_0)<0,\]
contradicting $h'\geq 0$.

It remains to show $h'(R)>0$. Assume $h'(R)=0$. Using equation (\ref{ODE}) satisfied by $h$  together with the fact that
\[g'(h(R))=g'(h_+)=0,\]
we have
\[h''(R) = 6 \frac{h(R)}{R^2}>0,\]
again contradicting $h'\geq 0$.
\end{proof}

\section{Uniqueness of h}\label{unique}

In this section we assume $t<0$, so that the function $g$ is decreasing in $[0,h_+]$.

\begin{thm}\label{unique thm}
In the temperature regime $T<T^*$, the energy functional $I$ admits a unique global minimizer $h\in W_h$.
\end{thm}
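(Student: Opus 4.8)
The plan is to rule out, for $t<0$, the existence of two distinct global minimizers $h_1\neq h_2$ of $I$. First I would reduce to the \emph{ordered} case. Both $\min(h_1,h_2)$ and $\max(h_1,h_2)$ lie in $W_h$, and since the elastic energy obeys the standard splitting identity $\int_{B_R}\!\big(|\nabla Q_{\min(h_1,h_2)}|^2+|\nabla Q_{\max(h_1,h_2)}|^2\big)dx=\int_{B_R}\!\big(|\nabla Q_{h_1}|^2+|\nabla Q_{h_2}|^2\big)dx$ while the bulk term is unchanged pointwise, both $\min(h_1,h_2)$ and $\max(h_1,h_2)$ are again global minimizers, hence analytic solutions of (\ref{ODE}) by the Proposition of Section~\ref{firstprop}. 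If $\{h_1<h_2\}$ were a proper nonempty open subset of $(0,R)$, a boundary point $r_0\in(0,R)$ would be an interior minimum of $h_2-h_1\geq0$, so $h_1(r_0)=h_2(r_0)$ and $h_1'(r_0)=h_2'(r_0)$, and Cauchy--Lipschitz uniqueness for (\ref{ODE}) away from the origin would force $h_1\equiv h_2$. Hence, after relabelling, $h_1<h_2$ on $(0,R)$, with $h_1(0)=h_2(0)=0$ and $h_1(R)=h_2(R)=h_+$; moreover $h_1''(0)<h_2''(0)$, since equality would give $h_1\equiv h_2$ through Lemma~\ref{cauchypb}.

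The second step is a pointwise estimate available precisely because $t<0$. For any minimizer $h$ --- which is increasing with $h'>0$ on $(0,R]$ by the Proposition of Section~\ref{g} --- rewrite (\ref{ODE}) as $(r^2h')'=6h+r^2g'(h)$. Since $g$ is decreasing on $[0,h_+]$ and $h$ takes values in $(0,h_+)$ on $(0,R)$, we have $g'(h)<0$ there, so $(r^2h')'<6h$; integrating from $0$ (where $r^2h'\to0$) and using that $h$ is increasing,
\[r^2h'(r)<6\int_0^r h(s)\,ds<6r\,h(r),\qquad\text{i.e.}\qquad 6h(r)-r\,h'(r)>0\quad\text{on }(0,R].\]

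Next I would establish non-degeneracy of the linearization at any minimizer. Differentiating (\ref{ODE}), equation (\ref{h'}) shows that $h'$ solves $L_h[h']=\tfrac{2}{r^3}\big(6h-rh'\big)$, where $L_h\phi:=-\phi''-\tfrac2r\phi'+\tfrac{6}{r^2}\phi+g''(h)\phi$ is the linearized operator; by the previous step $L_h[h']>0$ on $(0,R]$, so $h'$ is a positive strict supersolution of $L_h$. Using $\psi=h'\zeta$ and (\ref{h'}) as in Section~\ref{g}, the second variation rearranges into a sum of nonnegative terms,
\[I''(h)\cdot(\psi,\psi)=\int_0^R r^2(h')^2(\zeta')^2\,dr+\int_0^R\frac{2h'}{r}\big(6h-rh'\big)\zeta^2\,dr,\]
which vanishes only when $\zeta$ is constant, hence --- since $\psi(R)=0$ and $h'(R)>0$ force $\zeta(R)=0$ --- only when $\psi\equiv0$. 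Equivalently: a solution $w$ of $L_h[w]=0$ on $(0,R)$ with $w(r)/r^2\to1$ as $r\to0$ stays strictly positive on $(0,R]$, for a first zero $r_0$ would make $w$ a positive Dirichlet eigenfunction of $L_h$ on $(0,r_0)$ of eigenvalue $0$, contradicting that the strict supersolution $h'$ forces the first Dirichlet eigenvalue to be positive.

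Finally I would conclude by a shooting argument. For $a>0$ let $h_a$ be the solution of (\ref{ODE}) with $h_a(r)/r^2\to a$ furnished by Lemma~\ref{cauchypb}, and set $\Phi(a):=h_a(R)$; the two minimizers correspond to parameters $a_1<a_2$ with $\Phi(a_1)=\Phi(a_2)=h_+$. The variation $w_a:=\partial_a h_a$ solves $L_{h_a}[w_a]=0$ with $w_a(r)/r^2\to1$, so by the previous step $\Phi'(a_i)=w_{a_i}(R)>0$ at each minimizing parameter. Using the ordering $h_{a_1}<h_{a_2}$ one shows that $h_a$ stays between $h_{a_1}$ and $h_{a_2}$, and increasing, for every $a\in[a_1,a_2]$; then steps two and three apply to every such $h_a$ and give $\Phi'>0$ on all of $[a_1,a_2]$, so $\Phi$ is strictly increasing there, contradicting $\Phi(a_1)=\Phi(a_2)$. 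Hence $a_1=a_2$ and the minimizer is unique. I expect the main obstacle to be exactly this last step: establishing monotone dependence $h_{a_1}\le h_a\le h_{a_2}$ (and monotonicity of the intermediate trajectories) for parameters between two minimizing values --- since $g''$ changes sign on $[0,h_+]$ this is not automatic and must be squeezed out of the supersolution property of $h'$ together with the bound $6h-rh'>0$ --- along with the routine check that the substitution $\psi=h'\zeta$ is admissible for all $\psi\in W_{h,0}$.
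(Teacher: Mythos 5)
Your first three steps are correct, and they take a genuinely different (and in places stronger) route than the paper: the min/max replacement plus the touching-point/Cauchy--Lipschitz argument is a valid alternative to the paper's gluing-plus-analyticity argument for the strict ordering $h_1<h_2$ on $(0,R)$; the identity $L_h[h']=\tfrac{2}{r^3}(6h-rh')$ together with the bound $6h-rh'>0$ (which does follow from $(r^2h')'=6h+r^2g'(h)$, $g'<0$ on $(0,h_+)$ and monotonicity of $h$) checks out, and the resulting positivity of $I''(h)$ is a non-degeneracy statement the paper never proves. The genuine gap is in the final shooting step, and it is exactly where you locate it. To get $\Phi'(a)>0$ for \emph{every} $a\in[a_1,a_2]$ you need, for each intermediate trajectory $h_a$, that $h_a'>0$ and $0\le h_a\le h_+$ (hence $6h_a-rh_a'>0$). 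But those properties are variational in origin: $0\le h\le h_+$ comes from a maximum principle for minimizers and $h'>0$ from the sign of the second variation, neither of which is available for a non-minimizing solution of (\ref{ODE}). A comparison principle that would squeeze $h_{a_1}\le h_a\le h_{a_2}$ out of the ordering of the endpoints would require a sign on $g''$, and for $t<0$ one has $g''(0)=t<0$ while $g''(h_+)=h_+(4h_+-3)>0$, so $g''$ genuinely changes sign on $[0,h_+]$. As written, the argument does not close; you would also need to justify that $h_a$ extends to $[0,R]$ and depends differentiably on $a$ through the singular point $r=0$ (a parameter-dependent version of Lemma~\ref{cauchypb}), which is routine but not free.

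The paper closes the argument by an entirely different device that requires none of this: a Pohozaev identity for solutions of the Euler--Lagrange system, which for two minimizers reduces to
\[
8\pi\int_0^R r^2\big(g(h_1)-g(h_2)\big)\,dr \;=\; 2\pi\big(h_2'(R)^2-h_1'(R)^2\big).
\]
Once the strict ordering $h_1<h_2$ on $(0,R)$ is known --- which your step one already provides --- the left-hand side is positive because $g$ is decreasing on $[0,h_+]$ when $t<0$, while $h_1(R)=h_2(R)$ and $h_1<h_2$ force $h_1'(R)\ge h_2'(R)>0$, making the right-hand side non-positive. So you are one lemma (the Pohozaev identity) away from a complete proof, and the delicate shooting machinery of your last step is not needed for uniqueness itself; conversely, your steps two and three establish a strict positivity of the second variation that is of independent interest but does not substitute for the missing monotone dependence on the shooting parameter.
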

\begin{proof}The Euler Lagrange equations (\ref{euler}), may be rewritten as
\begin{equation}\label{euler2}
\Delta Q_{ij} = \frac{\partial f_b}{\partial Q_{ij}}(Q) + \sqrt{6}\: \delta_{ij} \tr (Q^2).
\end{equation}
Solutions of (\ref{euler2}) satisfy the following Pohozaev identity \cite[Lemma~2]{majumdarzarnescu10}: 
\[
\frac{1}{R}\int_{\partial B_R}\!\!\!\! |x\cdot\nabla Q|^2\: d\sigma(x)  - 
\frac{R}{2}\int_{\partial B_R} \!\!\!\!|\nabla Q|^2 \: d\sigma(x) + 
\frac{1}{2}\int_{B_R}\!\!\!\! |\nabla Q|^2\: dx = - 3 \int_{B_R}\!\!\!\! f_b(Q)\: dx,
\]
where $d\sigma$ stands for the usual measure on the sphere $\partial B_R$.

Using the definition of the energy $E$, this identity is equivalent to

\begin{equation}\label{poh}
E(Q)+2 \int_{B_R}\!\!\!\! f_b(Q)\: dx = \frac{R}{2}\int_{\partial B_R}\!\!\!\! |\nabla Q|^2 \: d\sigma(x) - \frac{1}{R}\int_{\partial B_R}\!\!\!\! |x\cdot\nabla Q|^2 \: d\sigma(x).
\end{equation}

Now let us assume there exist $h_1 \neq h_2\in W_h$ satisfying 
\[I(h_1)=I(h_2)=\min_{W_h} I.\] Applying (\ref{poh}) to $Q_{h_i}$ for $i=1,2$ and subtracting the two equations, we find
\begin{equation}\label{uni}
8 \pi \int_0^R r^2\big( g(h_1) - g(h_2)\big) dr = 2 \pi \big( h_2'(R)^2 - h_1'(R)^2\big).
\end{equation}

The two functions $h_1$ and $h_2$ are distinct solutions of the Euler Lagrange equation (\ref{ODE}) corresponding to the minimization of $I$.
Hence Lemma~\ref{cauchypb} ensures that they cannot coincide on a neighbourhood of zero : we have for instance $h_2>h_1$ on $(0,\epsilon)$. Actually they can never meet before $R$ : let us assume the existence of a $r_0\in (0,R)$ such that $h_1(r_0)=h_2(r_0)$. Then we obtain a new minimizer $\tilde{h}$ of $I$,
\[\tilde{h}= h_2 \mathbf{1}_{(0,r_0)} + h_1 \mathbf{1}_{(r_0,R)}.\]

Let us prove the fact that $\tilde{h}$ is a minimizer : firstly it lies in the admissible space, and, denoting by $e[h]$ the energy density ($I(h)=\int_0^R e[h] dr$), we have
\[I(h_2)\leq I(h_1 \mathbf{1}_{(0,r_0)} + h_2 \mathbf{1}_{(r_0,R)}) = \int_0^{r_0} e[h_1] \: dr + \int_{r_0}^R e[h_2] \: dr,\]
since $h_2$ is a minimizer and $h_1 \mathbf{1}_{(0,r_0)} + h_2 \mathbf{1}_{(r_0,R)}$ lies in the admissible space. Therefore it holds
\[\int_0^{r_0} e[h_2] \: dr \leq \int_0^{r_0} e[h_1] \: dr,\]
and adding $\int_{r_0}^R e[h_1] dr$ on both sides of this inequality exactly yields $I(\tilde{h})\leq I(h_1)$, hence proving the affirmation.

In particular, since $\tilde{h}$ is a minimizer it must be analytic. But at $r_0$ all its right derivatives are equal to those of $h_1$, thus $\tilde{h}=h_1$ on a neighbourhood of $r_0$, so that $h_1=h_2$ which contradicts our primary assumption. Therefore it holds \[h_1<h_2\quad \mathrm{on} \quad(0,R).\] In particular it implies, together with $h_1(R)=h_2(R)$, that
\[h_1'(R)\geq h_2'(R), \]
so the right-hand side of (\ref{uni}) is non positive.

On the other hand, since the energy density $g$ is decreasing on $[0,h_+]$, and $h_1<h_2$,  the left-hand side of (\ref{uni}) is positive : we obtain a contradiction.
\end{proof}

A different but related question is whether limiting maps obtained in the limit $R\to\infty$ satisfy the ordinary differential equation (\ref{ODE}) with the limiting boundary condition $h(\infty)=h_+$. Uniqueness of solutions to this boundary value problem, without minimality assumptions, is established in \cite{ignatnguyen}. For a discussion about uniqueness in the limit $t\to\infty$, see also \cite[Section~3]{henaomajumdar} and the references therein.

\section{Acknowledgements}

The author thanks P.Mironescu for many instructive discussions and very helpful advice. He also thanks A.Zarnescu for useful discussions. He learned from A.Zarnescu that in a forthcoming paper \cite{ignatnguyen}, R.Ignat, L.Nguyen, V.Slastikov and A.Zarnescu prove a uniqueness result for global solutions of (\ref{ODE}) with $h(\infty)=h_+$. Eventually he thanks the anonymous referee for his/her careful review and several suggestions to improve the article (in particular bringing \cite{henaomajumdar} to his attention).

\appendix

\section{Non-dimensionalization}\label{nondim}

We define, as in \cite{mkaddemgartland99}, the typical length $\xi$ and the reduced temperature $t$, in terms of which one can express a dimensionless version of the problem :
\[\xi=\sqrt{\frac{27cL}{b^2}},\quad t=\frac{27 ac}{b^ 2}.\]

For $Q\in W_Q$ define $\tilde{Q}$ by
\[\tilde{Q}(\tilde{x})=q_0 Q(\xi\tilde{x})\quad\text{for}\;|\tilde{x}|< \tilde{R}:=\frac{R}{\xi}\]
where
\[q_0=\sqrt{\frac{27c^2}{2b^2}}.\]

Making the change of variable $x=\xi\tilde{x}$ in the integral (\ref{energy}) defining the energy then gives
\begin{eqnarray*}
E(Q ) & = & \int_{B_{\tilde{R}}} \Big( \frac{L}{2q_0^2\xi^2}|\nabla \tilde{Q}|^2 + f_b \big(\frac{1}{q_0}\tilde{Q} \big) \Big)\:\xi^3 d\tilde{x} \\
& = & \int_{B_{\tilde{R}}} \Big(\frac{L\xi}{2q_0^2}|\nabla\tilde{Q}|^2 + \frac{a\xi^3}{2q_0^2}\tr(\tilde{Q}^2)-\frac{b\xi^3}{3q_0^3}\tr(\tilde{Q}^3)+\frac{c\xi^3}{4q_0^4}\tr(\tilde{Q}^2)^2 \Big)\: d\tilde{x}.
\end{eqnarray*}
Direct computation shows that
\[ \frac{L\xi}{q_0}=\frac{1}{t}\frac{a\xi^3}{q_0^2}
=\frac{1}{\sqrt{6}}\frac{b\xi^3}{3q_0^3} = 2 \frac{c\xi^3}{4q_0^3}=\sqrt{\frac{4b^2L^3}{27c^3}}.\]
Therefore it holds
\[\sqrt{\frac{27c^3}{4b^2L^3}}E(Q) = \tilde{E}(\tilde{Q}),\]
where the rescaled energy $\tilde{E}$ is defined by
\[\tilde{E}(\tilde{Q})=\int_{B_{\tilde{R}}} \Big( \frac{1}{2}|\nabla \tilde{Q}|^2 + \frac{t}{2}\tr(\tilde{Q}^2) - \sqrt{6} \tr(\tilde{Q^3}) + \frac{1}{2}\tr(\tilde{Q}^2)^2\Big)\: d\tilde{x}.\]

\bibliographystyle{plain}
\bibliography{uniqueness}
\end{document}